\documentclass[12pt]{extarticle}
\usepackage{extsizes}
\usepackage[reqno]{amsmath} 
\usepackage{amsmath}
\usepackage{amssymb}
\usepackage{amsthm}
\usepackage{amscd}
\usepackage{amsfonts}
\usepackage{graphicx}
\usepackage{dsfont}
\usepackage{fancyhdr}
\usepackage{enumerate}
\usepackage{youngtab}
\usepackage[utf8]{inputenc}
\usepackage{comment}



\usepackage{subfigure}
\usepackage[margin=1.5in]{geometry}
\usepackage{graphicx}
\usepackage{algorithm}
\usepackage{algpseudocode}
\usepackage{color}
\usepackage[hidelinks]{hyperref}
\usepackage{notation}

\theoremstyle{theorem}

\newtheorem{corollary}{Corollary}

\newtheorem{lemma}[corollary]{Lemma}
\newtheorem{lemma*}[lem6]{Lemma}

\newtheorem{theorem}[corollary]{Theorem}

\begin{document}

\AtEndDocument{%
  \par
  \medskip
  \begin{tabular}{@{}l@{}}%
    \textsc{Gabriel Coutinho}\\
    \textsc{Dept. of Computer Science} \\ 
    \textsc{Universidade Federal de Minas Gerais, Brazil} \\
    \textit{E-mail address}: \texttt{gabriel@dcc.ufmg.br} \\ \ \\
    \textsc{Thomás Jung Spier} \\
    \textsc{Dept. of Computer Science} \\ 
    \textsc{Universidade Federal de Minas Gerais, Brazil} \\
    \textit{E-mail address}: \texttt{thomasjung@dcc.ufmg.br}
  \end{tabular}}

\title{Sums of squares of eigenvalues and the vector chromatic number}
\author{Gabriel Coutinho\footnote{gabriel@dcc.ufmg.br} \and Thomás Jung Spier\footnote{thomasjung@dcc.ufmg.br}}
\date{\today}
\maketitle
\author
\vspace{-0.8cm}

\begin{abstract} 
   In this short paper we prove that the sum of the squares of negative (or positive) eigenvalues of the adjacency matrix of a graph is lower bounded by the sum of the degrees divided by the vector chromatic number, resolving a conjecture by Wocjan, Elphick and Anekstein (2018).
\end{abstract}

\begin{center}
\textbf{Keywords}
eigenvalues ; vector chromatic number
\end{center}


\ \\ \
Let $G$ be a graph, and $A$ be its adjacency matrix. In this paper we are concerned with the sums of squares of positive and negative eigenvalues, thus it will be convenient to express the spectral decomposition of $A$ as
\[
A = \sum_{\theta > 0} \theta E_\theta + \sum_{\lambda< 0} \lambda E_\lambda.
\]
Denote
\[
A^+ = \sum_{\theta > 0} \theta E_\theta \quad \text{and} \quad A^- = - \sum_{\lambda< 0} \lambda E_\lambda,
\]
thus $A = A^+ - A^-$, and both $A^+$ and $A^-$ are positive semidefinite.

Let $s^+$ and $s^-$ denote the sum of the squares of the positive and negative eigenvalues of $A$, respectively, thus
\[
s^+ = \lVert A^+ \rVert^2 \quad \text{and} \quad s^- = \lVert A^- \rVert^2.
\]

Denote the adjacency matrix of the complement of $G$ by $\overline{A}$. The vector chromatic number of $G$, denoted by $\chi_{\mathrm{vec}}(G)$, is given by the semidefinite program (sdp) below (see \cite[p. 428, Eq. (23)]{mceliece1977new}):

\begin{align} \max \quad & \langle J, Z\rangle \label{sdp1}\\
\textrm{subject to} \quad & \langle I, Z\rangle = 1 \nonumber\\
& Z \circ \overline{A} = 0 \nonumber\\
& Z\geq 0, \, Z\succeq 0. \nonumber
\end{align}

We prove the following result, answering a conjecture raised by Wocjan, Elphick and Anekstein \cite{wocjan2018more} and strengthening results due to Ando and Lin \cite{ando2015proof} and Guo and Spiro \cite{guo2022new}.

\begin{theorem}\label{thm:main_result} For every graph $G$ with $m$ edges, we have
\[
\min\{s^+,s^-\} \geq \frac{2m}{\chi_{\mathrm{vec}}(G)}.
\]
\end{theorem}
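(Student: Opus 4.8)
The plan is to lower-bound $\chi_{\mathrm{vec}}(G)$, since the claim $\min\{s^+,s^-\}\geq 2m/\chi_{\mathrm{vec}}(G)$ is equivalent, via $s^++s^-=\operatorname{tr}(A^2)=2m$, to the pair of bounds $\chi_{\mathrm{vec}}(G)\geq 1+s^+/s^-$ and $\chi_{\mathrm{vec}}(G)\geq 1+s^-/s^+$ (the form in which the chromatic-number results of Ando--Lin and Guo--Spiro appear). First I would record the spectral identities that drive everything: since the ranges of $A^+$ and $A^-$ are orthogonal, $A^+A^-=0$, whence $\langle A,A^+\rangle=\lVert A^+\rVert^2=s^+$ and $\langle A,A^-\rangle=-\lVert A^-\rVert^2=-s^-$, and moreover $\operatorname{tr}(A^+)=\operatorname{tr}(A^-)$ because $\operatorname{tr}(A)=0$. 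For any $Z$ feasible for \eqref{sdp1} the off-diagonal support of $Z$ lies on the edges of $G$, so $\langle J,Z\rangle=\operatorname{tr}(Z)+\langle A,Z\rangle=1+\langle A,Z\rangle$. Thus $\chi_{\mathrm{vec}}(G)=1+\max_Z\langle A,Z\rangle$ over feasible $Z$, and it suffices to exhibit one feasible $Z$ with $\langle A,Z\rangle\geq s^+/s^-$ (and symmetrically one with $\langle A,Z\rangle\geq s^-/s^+$).

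The core is to manufacture such a $Z$, namely a matrix that is simultaneously positive semidefinite, entrywise nonnegative, and supported on the diagonal together with the edges, out of the spectral data of $A$. The natural engine is the Schur product theorem: if $C\succeq 0$ is a correlation matrix ($C_{ii}=1$) and $Q\succeq 0$ is entrywise nonnegative, then $C\circ Q\succeq 0$ is again entrywise nonnegative, and one can force edge-support by taking $C$ itself supported there (for instance $C=I+\varepsilon A$ with $\varepsilon\leq 1/\lVert A^-\rVert_{\mathrm{op}}$). Writing $\langle A,Z\rangle=\langle A^+,Z\rangle-\langle A^-,Z\rangle$ and using the identities above, the task becomes to choose $C$ and $Q$ so that the positive part contributes $\approx s^+/s^-$ while the negative part is paid for by $s^-$. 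I would dualise this: working with an optimal vector $t$-colouring, i.e. a Gram matrix $M\succeq 0$ with $M_{ii}=1$ and $M_{ij}\leq -1/(t-1)$ on edges where $t=\chi_{\mathrm{vec}}(G)$, the matrix $B:=(t-1)M+J$ is positive semidefinite with $B_{ii}=t$ and $B_{ij}\leq 0$ on every edge. Consequently $\langle B,A\rangle=2\sum_{ij\in E}B_{ij}\leq 0$, and splitting $\langle B,A\rangle=\langle B,A^+\rangle-\langle B,A^-\rangle$ into two nonnegative terms yields the pivotal inequality $\langle B,A^+\rangle\leq\langle B,A^-\rangle$.

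The main obstacle is to upgrade $\langle B,A^+\rangle\leq\langle B,A^-\rangle$ to the quadratic statement $s^+\leq (t-1)s^-$: a priori the pairings $\langle B,A^\pm\rangle$ are only linear in the eigenvalues and the Frobenius norm $\lVert M\rVert_F$ is uncontrolled, so a naive Cauchy--Schwarz loses the factor one needs. I expect the resolution to come from using the optimal primal--dual pair and complementary slackness, which pins the alignment between $B$ and the positive and negative eigenspaces and makes a suitably weighted Cauchy--Schwarz tight; concretely, one wants to show that the mass $\langle B,E_\mu\rangle$ that $B$ places on each eigenspace is distributed so that $\langle B,A^+\rangle\geq \kappa\, s^+$ and $\langle B,A^-\rangle\leq \kappa (t-1) s^-$ for a common $\kappa>0$, at which point the pivotal inequality closes the argument. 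Once $s^+\leq(\chi_{\mathrm{vec}}(G)-1)s^-$ is established, the same construction with the roles of $A^+$ and $A^-$ interchanged gives $s^-\leq(\chi_{\mathrm{vec}}(G)-1)s^+$, and together these are exactly $\min\{s^+,s^-\}\geq 2m/\chi_{\mathrm{vec}}(G)$.
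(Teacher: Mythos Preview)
Your proposal is not a proof: you explicitly flag the ``main obstacle'' of upgrading the linear inequality $\langle B,A^+\rangle\le\langle B,A^-\rangle$ to the quadratic one $s^+\le(t-1)s^-$, and then offer only a hope that complementary slackness will force $\langle B,E_\mu\rangle$ to align with the eigenspaces in the right proportions. There is no reason it should. Complementary slackness for the pair (\ref{sdp1})/dual ties the optimal $Z$ to the kernel of the dual slack matrix, not to the eigenspaces of $A$; in particular it says nothing about how $B$ pairs with $A^+$ versus $A^-$ beyond the linear inequality you already have. Your primal attempt (manufacture a feasible $Z$ via $C\circ Q$ with $C=I+\varepsilon A$) is also left unfinished, and the specific $C$ you suggest carries no information about $s^{\pm}$.

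The two ingredients you are missing are exactly the two lemmas of the paper. First, the right Schur-product candidate is simply $Z=A^+\circ A^+$, which is $\succeq 0$ and $\ge 0$ but is \emph{not} feasible for (\ref{sdp1}) because $A^+$ has nonzero entries on non-edges; Lemma~\ref{lem:chivec} removes this obstruction by showing $\chi_{\mathrm{vec}}(G)$ equals the relaxed program (\ref{sdp2}) with no support constraint, yielding $s^+=\langle J,Z\rangle\le \chi_{\mathrm{vec}}(G)\,\langle J-A,Z\rangle=\chi_{\mathrm{vec}}(G)\sum_{uv\notin E(G)}(A^+)_{uv}^2$. Second, the passage from this inequality to $s^+\le(\chi_{\mathrm{vec}}(G)-1)s^-$ is precisely the Ando--Lin/Guo--Spiro Cauchy--Schwarz trick (Lemma~\ref{lem:guo_spiro}): from $A^+A^-=0$ one gets $\sum_{uv\notin E}(A^+)_{uv}^2=-\sum_{uv\in E}(A^+)_{uv}(A^-)_{uv}$, and Cauchy--Schwarz on the right-hand side turns the hypothesis into the desired quadratic bound. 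Your dual matrix $B$ never enters; the whole argument lives on the primal side with the Hadamard square of $A^+$.
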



Our proof below follows certain steps from Guo and Spiro \cite{guo2022new}, while avoiding the need to deal with homomorphisms to edge transitive graphs by exploiting an unexpected\footnote{To us.} observation about the sdp formulation of $\chi_{\mathrm{vec}}(G)$.

\subsubsection*{An alternative SDP formulation for $\chi_{\mathrm{vec}}(G)$}

As before, let $A$ and $\overline{A}$ denote the adjacency matrices of $G$ and its complement. Let $J$ be the all $1$s matrix, that is, $J = I + A + \overline{A}$.

\begin{lemma} \label{lem:chivec}
    Let $Z$ be a square matrix indexed by the vertices of a graph. Assume $Z \geq 0$ and $Z \succeq 0$. Then
    \[
    \langle J,Z \rangle \leq \chi_{\mathrm{vec}}(G) \langle J - A , Z \rangle.
    \]
\end{lemma}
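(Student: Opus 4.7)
The plan is to certify the inequality by producing a decomposition in the dual of the doubly nonnegative cone. Set $t = \chi_{\mathrm{vec}}(G)$ and use $J = I + A + \overline{A}$ to rewrite the desired bound $\langle J, Z\rangle \leq t\,\langle J-A, Z\rangle$ as the positivity statement
\[
\bigl\langle (t-1)J - tA,\; Z \bigr\rangle \geq 0.
\]
Since $Z$ is simultaneously PSD and entrywise nonnegative, it suffices to exhibit a decomposition $(t-1)J - tA = P + N$ with $P \succeq 0$ and $N \geq 0$ entrywise. Indeed, the inner product of two PSD matrices is nonnegative, so $\langle P, Z\rangle \geq 0$, and clearly $\langle N, Z\rangle \geq 0$.

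To construct such $P$ and $N$, I would appeal to the standard vector characterization of $\chi_{\mathrm{vec}}(G)$: there exist unit vectors $v_1,\ldots,v_n$ satisfying $\langle v_i,v_j\rangle \leq -\tfrac{1}{t-1}$ for every edge $ij$. Writing $V$ for their Gram matrix, I set $P := (t-1)\,V$. Then $P \succeq 0$, the diagonal entries satisfy $P_{ii} = t-1$, and the edge entries satisfy $P_{ij} \leq -1$. Cauchy--Schwarz applied to $V$ further gives $|P_{ij}|\leq t-1$ for every entry.

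With $P$ chosen, define $N := (t-1)J - tA - P$ and verify entrywise nonnegativity case by case: on the diagonal, $N_{ii} = (t-1)-(t-1) = 0$; on an edge $ij$, $N_{ij} = (t-1) - t - P_{ij} = -1 - P_{ij} \geq 0$ since $P_{ij}\leq -1$; and on a non-edge $ij$, $N_{ij} = (t-1) - P_{ij} \geq 0$ by the Cauchy--Schwarz bound $|P_{ij}|\leq t-1$.

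The only nontrivial ingredient is the passage from the SDP formulation of $\chi_{\mathrm{vec}}(G)$ (which is the one stated in the paper) to the vector characterization used here; this is classical and follows by Cholesky factorization of an optimal dual slack matrix, or equivalently by writing out the dual SDP and noting that it asks for a PSD matrix with diagonal $\leq t-1$ and edge entries $\leq -1$. Once this is invoked the rest of the proof is an entry-by-entry check, so I do not anticipate any substantive obstacle beyond ensuring that the strong-duality conditions hold for the SDP in question (which they do, since the primal is strictly feasible via $Z = I/n$ when $G$ has no isolated vertex, and otherwise reduces to the induced subgraph).
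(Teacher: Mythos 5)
Your decomposition idea is sound and genuinely different from the paper's argument: you produce a dual-style certificate $(t-1)J-tA=P+N$ with $P\succeq 0$ and $N\geq 0$, where $P=(t-1)V$ is built from the Gram matrix of a vector $t$-coloring, and your entrywise verification (diagonal, edges via $P_{ij}\leq -1$, non-edges via $|P_{ij}|\leq t-1$) is correct; the degenerate edgeless case $t=1$ is also harmless since then $A=0$ and the inequality is trivial. The paper instead works entirely on the primal side: it shows the relaxed SDP \eqref{sdp2} (constraint $\langle I+\overline{A},Z\rangle=1$) has the same value as \eqref{sdp1}, by taking an optimal $Z_0$ of \eqref{sdp2} and adding the PSD, entrywise-correcting terms $(Z_0)_{uv}(e_u-e_v)(e_u-e_v)^T$ on the offending non-edge entries. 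That route is self-contained from the paper's definition of $\chi_{\mathrm{vec}}$ as the SDP \eqref{sdp1}, which is precisely what your route is not.

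The gap is the bridge you wave at in the last paragraph. In this paper $\chi_{\mathrm{vec}}(G)$ is \emph{defined} as the optimum of \eqref{sdp1}, so the existence of unit vectors with $\langle v_i,v_j\rangle\leq -1/(t-1)$ on edges for $t$ equal to that optimum is exactly the nontrivial content you must supply, and it is the strong-duality (dual attainment) direction, not weak duality. Your proposed justification fails as stated: $Z=I/n$ is \emph{not} a Slater point for \eqref{sdp1}, because the interior of the doubly nonnegative cone consists of matrices that are entrywise strictly positive (and positive definite), while the constraint $Z\circ\overline{A}=0$ forces zero entries whenever $\overline{G}$ has an edge; so no feasible point is strictly feasible, and primal Slater — which is what would give you attainment of the dual certificate $S$ whose normalized Gram vectors yield the coloring — is unavailable in this formulation. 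To close the gap you must either cite the known identity that the optimum of \eqref{sdp1} equals the Karger--Motwani--Sudan vector chromatic number (equivalently $\vartheta'(\overline{G})$), with attainment of the vector coloring, or carry out the duality argument in a reduced formulation (eliminating the zero-pattern constraints) where a constraint-qualification argument can actually be made. As written, the ``only nontrivial ingredient'' is asserted rather than proved, and the specific argument offered for it is incorrect.
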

\begin{proof}
First notice that
\[\langle J, Z\rangle\leq \chi_{\mathrm{vec}}(G) \langle J-A_G, Z\rangle\iff \langle J, Z\rangle\leq \chi_{\mathrm{vec}}(G) \langle I + \overline{A}, Z\rangle.\]

We claim that the inequality above holds for every $Z\geq 0$, $Z\succeq 0$. This is obtained by showing the stronger claim that $\chi_{\mathrm{vec}}(G)$ is equal to the following semidefinite program:

\begin{align} \max \quad & \langle J, Z\rangle \label{sdp2}\\
\textrm{subject to} \quad & \langle I+ \overline{A}, Z \rangle = 1 \nonumber \\
& Z\geq 0, \, Z\succeq 0. \nonumber
\end{align}
Every feasible solution of \eqref{sdp1} is clearly a feasible solution for \eqref{sdp2} with the same objective value. Thus $\chi_{\mathrm{vec}}(G)$ is smaller than or equal to the optimum of \eqref{sdp2}.

On the other hand, if $Z_0$ is an optimum solution for \eqref{sdp2}, let $S$ denote the set of edges $uv$ in $E(\overline{G})$ for which $(Z_0)_{uv} > 0$, and $e_u$ the characteristic vector of vertex $u$. Then
\[
\tilde{Z_0}:=Z_0 + \displaystyle\sum_{uv\in S}(Z_0)_{uv}(e_u-e_v)(e_u-e_v)^T
\]
is so that
\[
\tilde{Z_0} \geq 0, \ \tilde{Z_0} \succeq 0, \  \tilde{Z_0} \circ \overline{A} = 0, \ \langle \tilde{Z_0},I\rangle =  \langle Z_0,I+\overline{A}\rangle = 1,\ \text{and} \ \langle \tilde{Z_0},J\rangle =  \langle Z_0,J\rangle,
\]
therefore $\tilde{Z_0}$ is a feasible solution for \eqref{sdp1} with the same objective value as $Z_0$ in \eqref{sdp2}.
\end{proof}

We do not expect semidefinite program \eqref{sdp2} for $\chi_{\mathrm{vec}}(G)$ to be unknown, but we could not find a reference for it.

\subsubsection*{Remaining of the proof}

The remaining of the proof largely follows Lemma 2.2 from Guo and Spiro \cite{guo2022new}, which in turn closely relates to the neat argument from Ando and Lin \cite{ando2015proof}. For ease of notation in our summations below, we consider $E(G)$ to be a set of ordered pairs $uv := (u,v)$ which contains all pairs $(u,v)$ for which $\{u,v\}$ is an edge of the graph, and analogously for when we write $uv \not\in E(G)$.

\begin{lemma}\label{lem:guo_spiro} Let $G$ be a graph, and let $X$ and $Y$ be real positive semidefinite matrices with rows and columns indexed by $V(G)$. If $XY=0$, $X_{uv} = Y_{uv}$ whenever $uv \not \in E(G)$, and 
\[\lVert X \rVert^2\leq (1+\mu) \cdot 
\sum_{uv \not \in E(G)}X_{uv}^2.\]
then,
\[\lVert X \rVert^2\leq \mu\cdot \lVert Y \rVert^2.\]
\end{lemma}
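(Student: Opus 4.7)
The plan is to reduce both sides to entry-wise sums, rewrite the hypothesis as a ratio inequality between edge and non-edge contributions, then extract a second relation from $XY=0$ and combine the two via Cauchy--Schwarz.

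First I would split the Frobenius norm according to edges: set $S_E = \sum_{uv \in E(G)} X_{uv}^2$ and $S_{\bar E} = \sum_{uv \notin E(G)} X_{uv}^2$, so that $\|X\|^2 = S_E + S_{\bar E}$. The hypothesis then becomes simply
\[
S_E \;\leq\; \mu \, S_{\bar E}.
\]
Moreover, since $X_{uv} = Y_{uv}$ off edges, $S_{\bar E} = \sum_{uv \notin E(G)} Y_{uv}^2$, and in particular $\sum_{uv \in E(G)} Y_{uv}^2 = \|Y\|^2 - S_{\bar E}$. (Note that diagonal entries are automatically treated as non-edges, consistent with the paper's convention.)

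Next I would exploit $XY = 0$. Since $X$ and $Y$ are symmetric, $\langle X, Y\rangle = \mathrm{tr}(XY) = 0$, i.e.\ $\sum_{u,v} X_{uv} Y_{uv} = 0$. Partitioning this sum by edges and using the agreement of $X$ and $Y$ off the edges gives
\[
\sum_{uv \in E(G)} X_{uv} Y_{uv} \;=\; -\sum_{uv \notin E(G)} Y_{uv}^2 \;=\; -S_{\bar E}.
\]
Applying Cauchy--Schwarz to the left-hand sum then produces
\[
S_{\bar E}^{\,2} \;\leq\; S_E \cdot \bigl(\|Y\|^2 - S_{\bar E}\bigr).
\]

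Finally I would combine the two bounds. The case $S_{\bar E}=0$ forces $S_E=0$ via the hypothesis and the conclusion is trivial. Otherwise, dividing the Cauchy--Schwarz inequality by $S_{\bar E}$ and substituting $S_E \leq \mu S_{\bar E}$ yields $S_{\bar E} \leq \mu (\|Y\|^2 - S_{\bar E})$, i.e.\ $(1+\mu)\,S_{\bar E} \leq \mu\, \|Y\|^2$. Since $\|X\|^2 = S_E + S_{\bar E} \leq (1+\mu)\,S_{\bar E}$, this closes the argument. There isn't a real obstacle here: the only substantive step is recognizing that the two pieces of structure that have not yet been used — orthogonality of column spaces (giving $\langle X,Y\rangle=0$) and the off-edge agreement $X_{uv}=Y_{uv}$ — combine precisely so that Cauchy--Schwarz rewrites $S_{\bar E}^2$ in terms of $\|Y\|^2$.
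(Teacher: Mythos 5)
Your proof is correct and follows essentially the same route as the paper: both use $XY=0$ to get $\sum_{uv\in E(G)}X_{uv}Y_{uv}=-\sum_{uv\notin E(G)}X_{uv}^2$ (you via $\operatorname{tr}(XY)=0$ directly, the paper via $\lVert X+Y\rVert^2=\lVert X-Y\rVert^2$), then apply Cauchy--Schwarz over the edge sum and combine with the hypothesis. The only differences are cosmetic bookkeeping in how the final inequalities are rearranged, plus your explicit (and harmless) treatment of the degenerate case $\sum_{uv\notin E(G)}X_{uv}^2=0$.
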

\begin{proof} Observe that, $XY=0$ implies that $\lVert X + Y \rVert^2=\lVert X - Y \rVert^2$, which is equivalent to
\[
\sum_{uv \in E(G)} (X_{uv} + Y_{uv})^2 + \sum_{uv \not\in E(G)}(X_{uv} + Y_{uv})^2 = \sum_{uv \in E(G)} (X_{uv} -Y_{uv})^2 + \sum_{uv \not\in E(G)}(X_{uv} - Y_{uv})^2,
\]
which by hypothesis on $X$ and $Y$ is equivalent to,
\[
\sum_{uv \in E(G)} (X_{uv} + Y_{uv})^2 + 4 \sum_{uv \not\in E(G)}X_{uv}^2 = \sum_{uv \in E(G)} (X_{uv} -Y_{uv})^2,
\]
therefore, introducing the placeholder $d$, we have
\[
d: = \sum_{uv \not\in E(G)}X_{uv}^2 = -\sum_{uv\in E(G)}X_{uv}Y_{uv}.
\]
By Cauchy-Schwarz applied to the sum on the right hand side, we obtain
\[
d^2 \leq \left(\sum_{uv \in E(G)} X_{uv}^2\right) \left( \sum_{uv \in E(G)} Y_{uv}^2\right) = (\lVert X \rVert^2 - d)(\lVert Y \rVert^2 -d ),
\]
which is equivalent to
\[\left(\lVert X\rVert^2 + \lVert Y\rVert^2\right)d\leq \lVert X\rVert^2\lVert Y\rVert^2.\]
By hypothesis we have, $\lVert X\rVert^2 \leq (1+\mu)d$, and thus, 
\[\lVert X\rVert^2+ \lVert Y\rVert^2\leq (1+\mu)\lVert Y\rVert^2 \implies \lVert X\rVert^2\leq \mu\lVert Y\rVert^2,\]
as we wanted.
\end{proof}

The proof of Theorem \ref{thm:main_result} is now immediate:

\begin{proof}[Proof of Theorem \ref{thm:main_result}]
    We apply Lemma~\ref{lem:guo_spiro} by making $X = A^+$ and $Y=A^-$, and $1+\mu = \chi_{\mathrm{vec}}(G)$. Clearly $XY = 0$, as the eigenspaces of $A^+$ and $A^-$ for nonzero eigenvalues are orthogonal, and $X_{uv} = Y_{uv}$ if $uv \not\in E(G)$, as $A = A^+ - A^-$. Moreover, by making $Z = X \circ X$, note that $Z \geq 0$, $Z \succeq 0$, and therefore, applying Lemma~\ref{lem:chivec}, we have
    \begin{align*}
        \lVert X \rVert^2 & = \text{sum of all entries of }X \circ X  \\ & = \langle J,Z\rangle \\ & \leq \chi_\mathrm{vec}(G) \langle J-A,Z\rangle  \\ & = \chi_\mathrm{vec}(G) \sum_{uv \not\in E(G)} X_{uv}^2,
    \end{align*}
    therefore, by Lemma~\ref{lem:guo_spiro}, $\lVert X \rVert ^2 \leq (\chi_\mathrm{vec}(G) - 1) \lVert Y \rVert^2$, that is, 
    \[
        s^+ \leq (\chi_\mathrm{vec}(G) - 1) s^-,
    \]
    and because $2m = s^+ + s^-$, the result follows for $s^-$. The proof for $s^+$ is obtained by interchanging the roles of $X$ and $Y$.
\end{proof}


\subsubsection*{Acknowledgements}

We acknowledge fruitful conversations about this problem with Marcel K. de Carli Silva, Thiago Oliveira and Levent Tunçel.

\bibliographystyle{plain}
\IfFileExists{references.bib}
{\bibliography{references.bib}}
{\bibliography{../references}}

	
\end{document}